\documentclass[a4paper,reqno,10pt]{amsart}

\usepackage{hyperref}
\usepackage{a4wide}

\usepackage{amssymb}
\usepackage{amstext}
\usepackage{amsmath}
\usepackage{amscd}
\usepackage{amsthm}
\usepackage{amsfonts}

\usepackage{graphicx}
\usepackage{latexsym}
\usepackage{mathrsfs}

\usepackage{enumitem}
\setlist[enumerate,1]{label={\upshape(\arabic*)}}
\setlist[enumerate,2]{label={\upshape(\alph*)}}

\usepackage{tikz}
\usetikzlibrary{cd,arrows,matrix,backgrounds,positioning,calc,decorations.markings,decorations.pathmorphing,decorations.pathreplacing}
\tikzset{black/.style={circle,fill=black,inner sep=3pt,outer sep=3pt},
         white/.style={circle,fill=white,draw=black,inner sep=3pt,outer sep=3pt},
}

\usepackage{array}
\newcolumntype{C}{>{$}c<{$}}

\usepackage{makecell}

\newtheorem{theorem}{Theorem}[section]

\newtheorem{corollary}[theorem]{Corollary}
\newtheorem{lemma}[theorem]{Lemma}
\newtheorem*{lemma*}{Lemma}
\newtheorem*{theorem*}{Theorem}
\newtheorem{proposition}[theorem]{Proposition}
\newtheorem{definition-proposition}[theorem]{Definition-Proposition}

\theoremstyle{definition}
\newtheorem{definition}[theorem]{Definition}

\newtheorem{remark}[theorem]{Remark}
\newtheorem{example}[theorem]{Example}

\renewcommand{\AA}{\mathcal{A}}

\newcommand{\CC}{\mathcal{C}}

\newcommand{\DD}{\mathcal{D}}

\renewcommand{\SS}{\mathcal{S}}

\newcommand{\WW}{\mathcal{W}}

\newcommand{\End}{\operatorname{End}\nolimits}

\newcommand{\RHom}{\mathbf{R}\strut\kern-.2em\operatorname{Hom}\nolimits}

\newcommand{\Cokernel}{\operatorname{Coker}\nolimits}

\newcommand{\coker}{\Cokernel}

\newcommand{\ov}{\overline}

\DeclareMathOperator{\moduleCategory}{\mathsf{mod}} \renewcommand{\mod}{\moduleCategory}

\DeclareMathOperator{\simp}{\mathsf{sim}}

\DeclareMathOperator{\brick}{\mathsf{brick}}

\DeclareMathOperator{\Filt}{\mathsf{Filt}}

\newcommand{\infl}{\rightarrowtail}
\newcommand{\defl}{\twoheadrightarrow}

\newenvironment{sbmatrix}{\left[\begin{smallmatrix}}{\end{smallmatrix}\right]}

\newcommand{\EE}{\mathcal{E}}

\numberwithin{equation}{section}

\begin{document}
\title[Schur's lemma for exact categories implies abelian]{Schur's lemma for exact categories implies abelian}

\author[H. Enomoto]{Haruhisa Enomoto}
\address{Graduate School of Mathematics, Nagoya University, Chikusa-ku, Nagoya. 464-8602, Japan}
\email{m16009t@math.nagoya-u.ac.jp}
\subjclass[2010]{18E10, 18E05}
\keywords{exact category; semibrick; wide subcategory; Schur's lemma}
\begin{abstract}
  We show that for a given exact category, there exists a bijection between semibricks (pairwise Hom-orthogonal set of bricks) and length wide subcategories (exact extension-closed length abelian subcategories). In particular, we show that a length exact category is abelian if and only if simple objects form a semibrick, that is, the Schur's lemma holds.
\end{abstract}

\maketitle

\section{Introduction}
In this paper, we consider the classical Schur's lemma in the context of Quillen's exact categories, from the viewpoint of \emph{semibricks} and \emph{wide subcategories}.

The Schur's lemma describes possible morphisms between simple objects in an abelian category, and is very fundamental and has lots of consequences. The Schur's lemma can be rephrased as follows: the set of simple objects in an abelian category forms a \emph{semibrick}, that is, every morphism is either zero or an isomorphism.

We can generalize the notion of simple objects in an abelian category to an exact category $\EE$, and has been investigated by several papers such as \cite{eno,bhlr,hr}. It is natural to ask whether the Schur's lemma holds in $\EE$, that is, simples in $\EE$ form a semibrick.
We will see in Corollary \ref{cor:schur} that, under mild assumption, this implies that $\EE$ is abelian. In other words, the Schur's lemma characterizes abelian categories among exact categories.

We will deduce this from the correspondence between semibricks and \emph{wide subcategories} of $\EE$. A wide subcategory is an extension-closed exact abelian subcategory of $\EE$, and has been investigated in the representation theory of a finite-dimensional algebra $\Lambda$ for the case $\EE = \mod\Lambda$, e.g. \cite{asai,it,ms}. In particular, wide subcategories of $\mod\Lambda$ are in bijection with other important objects, such as $\tau$-tilting modules, torsion(-free) classes, and so on (\cite{asai}).

For a wide subcategory $\WW$ of $\EE$, the Schur's lemma implies that the simples in $\WW$ is a semibrick. Conversely, it was shown by Ringel \cite{ringel} that for a given semibrick $\SS$ in an abelian category $\AA$, we can form a wide subcategory of $\AA$ whose simples are precisely objects in $\SS$. This immediately gives a bijection between semibricks in $\AA$ and a length wide category of $\AA$, and the aim of this paper is to generalize this bijection to exact categories (Theorem \ref{thm:main}).

\subsection{Conventions and notation}
Throughout this paper, \emph{all subcategories are assumed to be full and closed under isomorphisms}.
As for exact categories, we use the terminologies \emph{inflations}, \emph{deflations} and \emph{conflations}.
We refer the reader to \cite{buhler} for the basics of exact categories. We often denote by $0 \to X \to Y \to Z \to 0$ a conflation, $X \infl Y$ an inflation and $Y \defl Z$ a deflation.
A commutative diagram is \emph{exact} if every sub-diagram of the form $0 \to X \to Y \to Z \to 0$ is a conflation. For an inflation $X \infl Y$, we often denote $Y/X$ the cokernel of it.
Unless otherwise stated, we regard an abelian category as an exact category whose conflations are precisely usual short exact sequences, which we call the \emph{standard exact structure}. Also for an extension-closed subcategory $\DD$ of an exact category $\EE$, we always endow $\DD$ with the natural exact structure, that is, conflations in $\DD$ are precisely those in $\EE$ whose all terms are in $\DD$.

\section{Basic definitions and the main result}
First we introduce a \emph{semibrick} in an additive category.
\begin{definition}
  Let $\EE$ be an additive category.
  \begin{enumerate}
    \item An object $S$ in $\EE$ is a \emph{brick} if $\End_\EE(S)$ is a division ring, that is, $S$ is non-zero and every non-zero map $S \to S$ is an isomorphism.
    \item We denote by $\brick\EE$ the collection of isomorphism classes of all bricks in $\EE$.
    \item A subset $\SS$ of $\brick\EE$ is a \emph{semibrick} if $\EE(S,T) = 0$ holds for every two distinct objects $S$ and $T$ in $\SS$.
  \end{enumerate}
\end{definition}
Roughly speaking, a semibrick is a set of objects which satisfy the Schur's lemma like property.
Note that $\brick\EE$ may not form a set in general, but we require that a semibrick is actually a \emph{set} of bricks.

Next we introduce \emph{simple objects} in exact categories.
\begin{definition}
  Let $\EE$ be an exact category.
  \begin{itemize}
    \item An object $M$ in $\EE$ is called a \emph{simple object} in $\EE$ if $M$ is non-zero and there is no conflation $0 \to L \to M \to N \to 0$ in $\EE$ with $L,N \neq 0$.
    \item We denote by $\simp\EE$ the collection of isomorphism classes of simple objects in $\EE$.
    \item For a collection $\CC$ of objects in $\EE$, we denote by $\Filt\CC$ the subcategory of $\EE$ consisting of $X \in \EE$ such that there is a chain $0 = X_0 \infl X_1 \infl \cdots \infl X_l = X$ of inflations satisfying $X_i/X_{i-1} \in \CC$ for each $i$. We call such a chain \emph{$\CC$-filtration} and $l$ a \emph{length} of this $\CC$-filtration.
  \end{itemize}
\end{definition}
We say that an exact category $\EE$ is \emph{length} if $\simp\EE$ is a set and $\EE = \Filt(\simp\EE)$ holds, that is, every object has a $(\simp\EE)$-filtration.
For example, any extension-closed subcategory of an length abelian category is a length exact category.

A typical example of semibricks is the set of simple objects in an abelian category. The proof is the same as the classical Schur's lemma for modules.
\begin{lemma}[Schur's lemma]\label{lem:Schur}
  Let $\AA$ be an abelian category. Then the following hold.
  \begin{enumerate}
    \item Every simple object in $\AA$ is a brick.
    \item For two simple objects $S$ and $T$ in $\AA$, if $S$ and $T$ are not isomorphic, then $\AA(S,T) = 0$.
  \end{enumerate}
  In particular, if $\simp\AA$ is a set (e.g. if $\AA$ is length), then $\simp\AA$ is a semibrick.
\end{lemma}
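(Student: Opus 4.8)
The plan is to reduce everything to the standard image factorization in $\AA$. First I would unwind the definition of a simple object in this setting: since $\AA$ carries the standard exact structure, a conflation $0 \to L \to M \to N \to 0$ is precisely a short exact sequence, so ``$M$ simple'' means $M \neq 0$ and the only subobjects of $M$ are $0$ and $M$ --- the classical notion. It is worth recording explicitly that the exact-categorical definition and the classical one agree here, because for the standard exact structure every monomorphism into $M$ is an inflation (every mono in an abelian category is the kernel of its cokernel).

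Next I would prove (1) and (2) at the same time. Let $S$ and $T$ be simple objects in $\AA$ and let $f \colon S \to T$ be a nonzero morphism; factor it through its image as $S \defl \Image f \infl T$. The kernel $\ker f$ is a subobject of $S$, hence equals $0$ or $S$, and it cannot be $S$ since $f \neq 0$; thus $f$ is a monomorphism. Dually $\Image f$ is a subobject of $T$, hence equals $0$ or $T$, and it cannot be $0$ since $f \neq 0$; thus $f$ is an epimorphism. Since a morphism in an abelian category that is simultaneously monic and epic is an isomorphism, $f$ is an isomorphism. Specializing to $S = T$ shows that every nonzero endomorphism of $S$ is invertible, so $\End_\AA(S)$ is a division ring; together with $S \neq 0$ this gives $S \in \brick\AA$ and proves (1). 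Specializing to $S \not\iso T$ shows that no nonzero $f$ can exist, that is, $\AA(S,T) = 0$, which is (2).

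Finally, the ``in particular'' clause is formal: if $\simp\AA$ is a set, then by (1) it is a subset of $\brick\AA$, and by (2) it is pairwise Hom-orthogonal, hence a semibrick by definition.

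I do not expect any genuine obstacle: this is the classical Schur argument carried out inside an arbitrary abelian category. The only two points that require (minor) care are the identification of the exact-categorical definition of ``simple'' with the classical one under the standard exact structure, and the invocation of the abelian-category fact that a bimorphism (monic $+$ epic) is an isomorphism.
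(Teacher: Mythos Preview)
Your proof is correct and is precisely the classical Schur argument the paper has in mind; the paper itself omits the proof, merely remarking that ``the proof is the same as the classical Schur's lemma for modules.'' There is nothing to add.
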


Then we introduce a \emph{wide subcategory} of exact categories.
\begin{definition}
  Let $\EE$ be an exact category. A subcategory $\WW$ of $\EE$ is a \emph{wide} if the following conditions are satisfied:
  \begin{enumerate}
    \item $\WW$ is closed under extensions, that is, for any conflation $0 \to L \to M \to N \to 0$ in $\EE$, if $L$ and $N$ belong to $\EE$, then so does $M$.
    \item $\WW$ is an abelian category.
    \item The inclusion functor $\AA \hookrightarrow \EE$ is exact, that is, every usual short exact sequence $0 \to L \to M \to N \to 0$ in $\AA$ is a conflation in $\EE$.
  \end{enumerate}
  If in addition $\AA$ is a length abelian category, we say that $\WW$ is a \emph{length wide subcategory} of $\EE$.
\end{definition}

Now we can state the main result in this paper.
\begin{theorem}\label{thm:main}
  Let $\EE$ be an exact category. Then assignments $\SS \mapsto \Filt\SS$ and $\WW \mapsto \simp\WW$ give one-to-one correspondence between the following two classes.
  \begin{enumerate}
    \item The class of semibricks $\SS$ in $\EE$.
    \item The class of length wide subcategories $\WW$ in $\EE$.
  \end{enumerate}
\end{theorem}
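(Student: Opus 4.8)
The plan is to establish the two maps are well-defined and mutually inverse, proceeding through a sequence of lemmas. First I would analyze the map $\SS \mapsto \Filt\SS$. Given a semibrick $\SS$, I want to show $\Filt\SS$ is a length wide subcategory whose simples are exactly $\SS$. The key technical tool is a \emph{Jordan--Hölder type} statement: any two $\SS$-filtrations of an object $X \in \Filt\SS$ have the same length and the same multiset of composition factors from $\SS$. This should follow from the Hom-orthogonality in $\SS$ combined with an exchange/Schreier-refinement argument adapted to the exact setting --- the point being that a nonzero map from $S \in \SS$ into $X$, together with the filtration, forces $S$ to appear as a factor. I expect this to be the main obstacle, since in an exact category one does not have kernels/images in general, so the usual module-theoretic arguments must be replaced by careful diagram chases using only conflations and the closure properties of $\Filt\SS$ (which is extension-closed by construction, and one checks it is closed under the relevant sub/quotient operations internally).

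Once Jordan--Hölder is in hand, I would verify that $\Filt\SS$ is abelian. Concretely, given a map $f \colon X \to Y$ in $\Filt\SS$, I need to construct its kernel and cokernel inside $\Filt\SS$ and check that image equals coimage. The strategy is induction on the filtration length of $X$ (or $Y$): using a conflation $0 \to X' \to X \to S \to 0$ with $S \in \SS$, analyze whether the composite $X' \to X \to Y$ and the induced behaviour on $S$ give the needed factorization, invoking Schur-type orthogonality to control maps out of $S$. The exactness of the inclusion $\Filt\SS \hookrightarrow \EE$ is then essentially automatic from how conflations in an extension-closed subcategory are defined. That $\simp(\Filt\SS) = \SS$ follows because each $S \in \SS$ is a brick hence simple in $\Filt\SS$ (a proper conflation would contradict bricksiness via Jordan--Hölder), and conversely any simple object of $\Filt\SS$ has a one-step $\SS$-filtration.

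For the reverse map $\WW \mapsto \simp\WW$: since $\WW$ is a length abelian category, Schur's lemma (Lemma \ref{lem:Schur}) gives that $\simp\WW$ is a semibrick in $\WW$; because the inclusion $\WW \hookrightarrow \EE$ is exact and fully faithful, bricks and Hom-orthogonality are inherited, so $\simp\WW$ is a semibrick in $\EE$. It remains to see the two assignments are inverse. Starting from a semibrick $\SS$, the previous paragraph gives $\simp(\Filt\SS) = \SS$. Starting from a length wide subcategory $\WW$, I must show $\Filt(\simp\WW) = \WW$: the inclusion $\subseteq$ holds since $\WW$ is extension-closed and contains $\simp\WW$, and since exact sequences of $\WW$ are conflations in $\EE$; the reverse inclusion $\supseteq$ is exactly the statement that $\WW$ is length, i.e.\ every object of $\WW$ admits a composition series, which holds by the hypothesis that $\WW$ is a length abelian category (its composition factors lie in $\simp\WW$).

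Finally I would assemble these pieces. The logical skeleton is: (i) prove Jordan--Hölder for $\Filt\SS$; (ii) deduce $\Filt\SS$ is a length wide subcategory with $\simp(\Filt\SS)=\SS$; (iii) observe $\simp\WW$ is a semibrick in $\EE$ and $\Filt(\simp\WW)=\WW$; (iv) conclude the bijection. Besides the Jordan--Hölder step, a secondary delicate point will be checking the abelian axioms for $\Filt\SS$ purely within the exact structure --- in particular producing kernels and cokernels of arbitrary morphisms and verifying the coimage-to-image map is an isomorphism --- and I would handle this by induction on filtration length, reducing to the case where the source or target is simple and then applying Hom-orthogonality directly.
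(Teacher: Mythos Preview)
Your overall skeleton is correct and matches the paper's: show $\Filt\SS$ is a length wide subcategory with $\simp(\Filt\SS)=\SS$, then use Schur's lemma for $\simp\WW$ and extension-closedness plus lengthness for $\Filt(\simp\WW)=\WW$. The inductive strategy on filtration length is also the same. The differences are in the choice of key lemma and in how abelianness is verified.

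\textbf{Key lemma.} You propose a Jordan--H\"older/Schreier-refinement statement as the main technical tool. The paper avoids this entirely: its key lemma (Lemma~\ref{lem:frombrick}) is the much lighter fact that any nonzero morphism $S\to Y$ with $S\in\SS$ and $Y\in\Filt\SS$ is already an \emph{inflation} in $\EE$ with $Y/S\in\Filt\SS$. This is proved by walking down an $\SS$-filtration of $Y$ until the composite into a top factor is nonzero (hence an isomorphism by the semibrick condition), which splits that step and exhibits the map as a composite of inflations. This single lemma is exactly what drives the induction; Jordan--H\"older is not needed, and indeed your own inductive construction of the factorization does not use it either. Your invocation of Jordan--H\"older to show each $S\in\SS$ is simple in $\Filt\SS$ is also avoidable: a proper subobject $L\infl S$ in $\Filt\SS$ would contain some $S'\in\SS$ as a filtration factor, and the composite $S'\infl S$ is then a nonzero non-isomorphism between elements of $\SS$, contradicting the semibrick condition.

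\textbf{Abelianness.} You plan to construct kernels and cokernels and check coimage $=$ image directly. The paper instead uses the characterization (Proposition~\ref{prop:ab}) that an exact category is abelian with the standard structure if and only if every morphism is admissible, i.e.\ factors as a deflation followed by an inflation. Combined with a $2$-out-of-$3$ criterion for wideness (Lemma~\ref{lem:widecri}), this reduces the work to producing, for each $\varphi\colon X\to Y$ in $\Filt\SS$, a single factorization $\varphi=\iota\pi$ with $\pi$ a deflation and $\iota$ an inflation and middle term in $\Filt\SS$. That is precisely what the induction (peeling off a conflation $0\to S\to X\to X/S\to 0$ and invoking Lemma~\ref{lem:frombrick} on $\varphi|_S$) delivers, with two pullback squares assembling the factorization. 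Your dual choice of conflation $0\to X'\to X\to S\to 0$ would work equally well, but the admissible-morphism criterion saves you from separately verifying the coimage-to-image isomorphism.

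In short, your plan would go through, but the paper's route is leaner: it replaces Jordan--H\"older by the one-line observation about maps out of bricks, and it replaces the full abelian-axiom check by the admissibility criterion.
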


\begin{remark}
  This is a generalization of the classical result of Ringel \cite[1.2]{ringel}, where $\EE$ was assumed to be an abelian category with the standard exact structure. In fact, if $\EE$ is realized as an extension-closed subcategory of an abelian category, then this theorem can be deduced from the the Ringel's result. However, we give a proof which does not use any Gabriel-Quillen type embedding of $\EE$ into an abelian category, and thus reprove the Ringel's result.
\end{remark}

We will give a proof in the next section. Before this, let us observe some consequences.
First of all, this theorem is a bit surprising since the notion of semibricks does not depend on the exact structure on $\EE$, while the notion of wide subcategories clearly does. Indeed, if we consider two different exact structures, then it may happen that $\Filt\SS$ may differ for the same semibrick $\SS$ in $\EE$, as the following example shows.
\begin{example}
  Let $\Lambda$ be an artinian ring and $\mod\Lambda$ the category of finitely generated $\Lambda$-modules. Then $\mod\Lambda$ is a length abelian category. Denote by $\SS$ the set of isomorphism classes of simple $\Lambda$-modules, which is a semibrick by Schur's lemma.
  First we endow $\mod\Lambda$ with the standard exact structure. Then clearly $\Filt\SS = \mod\Lambda$ holds.
  On the other hand, we can endow $\mod\Lambda$ with the split exact structure, that is, conflations are only split short exact sequences. Then in this exact structure, $\Filt\SS$ is the category of finitely generated semisimple $\Lambda$-modules, which is not equal to $\mod\Lambda$ unless $\Lambda$ is semisimple. This category is not closed under extension in the standard exact structure on $\mod\Lambda$, although it is an exact abelian subcategory of $\mod\Lambda$.
\end{example}

Another application is a characterization of abelian categories via the Schur's lemma.
\begin{corollary}\label{cor:schur}
  Let $\EE$ be a length exact category. Then the following are equivalent.
  \begin{enumerate}
    \item $\EE$ is an abelian category with the standard exact structure.
    \item $\simp\EE$ is a semibrick, that is, for every two simple objects $S$ and $T$ in $\EE$, every morphism $S \to T$ is either zero or an isomorphism.
  \end{enumerate}
\end{corollary}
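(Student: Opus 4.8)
The plan is to derive the statement directly from Theorem \ref{thm:main}. The implication $(1)\Rightarrow(2)$ needs no real work: if $\EE$ is abelian with the standard exact structure, then a conflation is nothing but a short exact sequence, so ``simple in $\EE$'' coincides with ``simple in the abelian sense'', and $\simp\EE$ is a semibrick by Lemma \ref{lem:Schur} --- it is a set because $\EE$ is length. So everything is in $(2)\Rightarrow(1)$.

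For $(2)\Rightarrow(1)$ I would put $\SS:=\simp\EE$. Since $\EE$ is length, $\SS$ is a set, and by hypothesis it is a semibrick; hence Theorem \ref{thm:main} applies and $\Filt\SS$ is a length wide subcategory of $\EE$. But $\EE$ being length means precisely that $\EE=\Filt(\simp\EE)=\Filt\SS$, so $\EE$ is a length wide subcategory of \emph{itself}. By the very definition of a wide subcategory this already gives that $\EE$ carries the structure of a (length) abelian category and that every short exact sequence of this abelian structure is a conflation of the given exact structure.

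It then remains to see that, conversely, every conflation of $\EE$ is a short exact sequence of the abelian structure, which is exactly the assertion that the exact structure is the standard one; this last identification is the only point requiring care. Here I would use the elementary facts that in any exact category an inflation is a monomorphism, a deflation is an epimorphism, and in a conflation $0\to A\to B\to C\to 0$ the inflation $A\to B$ is a kernel of the deflation $B\to C$. Given a conflation $0\to A\to B\to C\to 0$ with inflation $f\colon A\to B$ and deflation $g\colon B\to C$, the map $f$ is then monic, $g$ is epic, and $f$ exhibits $A$ as $\ker g$; since a kernel is a categorical limit it is computed the same whether one views $\EE$ as an exact or as an abelian category, so $\im f=\ker g$ and the sequence is short exact in the abelian category $\EE$. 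Together with the inclusion from the previous paragraph this shows that conflations coincide with short exact sequences, i.e. $\EE$ has the standard exact structure. (Alternatively, one may just invoke that the standard exact structure is the maximal exact structure on an abelian category.) I do not expect any genuine obstacle once Theorem \ref{thm:main} is available: the thing to watch is simply not to confuse the one-sided statement ``the inclusion $\EE\hookrightarrow\EE$ is exact'' with the full coincidence of the two classes of sequences.
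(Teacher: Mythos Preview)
Your argument is correct and follows the same route as the paper: apply Theorem \ref{thm:main} to $\SS=\simp\EE$, use the length hypothesis to get $\EE=\Filt\SS$, and conclude that $\EE$ is a wide subcategory of itself. The paper simply asserts that this last condition is ``clearly equivalent to (1)'', whereas you spell out why every conflation is a short exact sequence of the abelian structure; your justification (inflation $=$ kernel of deflation, hence the sequence is exact in the abelian sense) is fine, and the alternative via maximality of the standard exact structure works too.
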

\begin{proof}
  Clearly (1) implies (2) by the Schur's lemma. Conversely, suppose that $\simp\EE$ is a semibrick. Theorem \ref{thm:main} implies that $\Filt(\simp\EE)$ is a wide subcategory of $\EE$, and $\Filt(\simp\EE) = \EE$ holds since $\EE$ is length. Thus $\EE$ is a wide subcategory of $\EE$, which is clearly equivalent to (1).
\end{proof}
\begin{remark}
  In \cite[Lemma 3.6]{hr}, it was shown that every \emph{admissible} morphism between simples is either zero or an isomorphism. Since every morphism in an abelian category is admissible, it generalizes the classical Schur's lemma. However, in general there are many non-admissible morphisms in $\EE$ (actually, every morphism is admissible if and only if $\EE$ is abelian by Proposition \ref{prop:ab}). In particular, Corollary \ref{cor:schur} says that for a \emph{non-abelian} length exact category $\EE$, there always exist non-zero non-isomorphisms between simples, which is a bit surprising to the author.
\end{remark}

\section{Proof of the main theorem}
\emph{Throughout this section, we denote by $\EE$ an exact category.}
The following characterization of abelian categories among exact categories are useful. Since this is well-known (e.g. \cite[Exercise 8.6]{buhler}) and the proof is easy, we omit it.
\begin{proposition}\label{prop:ab}
  Let $\EE$ be an exact category. Then the following are equivalent:
  \begin{enumerate}
    \item $\EE$ is an abelian category with the standard exact structure.
    \item Every morphism $\varphi$ in $\EE$ is admissible in the sense of \cite[Definition 8.1]{buhler}, that is, it can be written as $\varphi = \iota\pi$ such that $\pi$ is a deflation and $\iota$ is an inflation.
  \end{enumerate}
\end{proposition}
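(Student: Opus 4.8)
The plan is to prove the two implications separately, with (2) $\Rightarrow$ (1) carrying essentially all of the content. For (1) $\Rightarrow$ (2) I would first note that in an abelian category with the standard exact structure every monomorphism is an inflation and every epimorphism is a deflation: for a mono $m$ the sequence $0 \to \ker(\coker m) \xrightarrow{m} \cdot \to \coker m \to 0$ is a short exact sequence, hence a conflation, and dually for epis. Then the usual image factorization $\varphi = (\im\varphi \infl Y)\circ(X \defl \im\varphi)$ of an arbitrary morphism $\varphi$ exhibits $\varphi$ as an inflation following a deflation, so $\varphi$ is admissible.

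For (2) $\Rightarrow$ (1) the strategy is to recognize $\EE$ as abelian via the standard ``normal and conormal'' characterization: an additive category possessing all kernels and cokernels in which every monomorphism is a kernel and every epimorphism is a cokernel is abelian. As a first step I would construct kernels and cokernels. Given $\varphi\colon X \to Y$, write $\varphi = \iota\pi$ with $\pi\colon X \defl I$ a deflation and $\iota\colon I \infl Y$ an inflation, and let $k\colon K \infl X$ be the kernel of $\pi$ coming from the defining conflation of $\pi$. Since $\iota$ is a monomorphism, $\varphi f = 0$ is equivalent to $\pi f = 0$ for every $f$, so $k$ is also a kernel of $\varphi$; dually the cokernel $Y \defl C$ of $\iota$ is a cokernel of $\varphi$. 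Hence $\EE$ has all kernels and cokernels.

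The crux is the lemma that a deflation which is simultaneously a monomorphism is an isomorphism. Indeed, if $\pi\colon X \defl I$ is mono with kernel $k\colon K \infl X$, then $\pi k = 0$ forces $k = 0$; as $k$ is an inflation, hence a monomorphism, $k = 0$ forces $K = 0$ (taking $\id_K$ and $0$ as test maps), and the resulting conflation $0 \to 0 \to X \xrightarrow{\pi} I \to 0$ makes $\pi$ an isomorphism. Now, for any monomorphism $m\colon X \to Y$, factor $m = \iota\pi$ as above; since $m$ and $\iota$ are mono, so is $\pi$, whence by the lemma $\pi$ is an isomorphism and $m = \iota\pi$ is an inflation. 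Dually, every epimorphism is a deflation. Because inflations are kernels of their cokernels and deflations are cokernels of their kernels, every monomorphism is then a kernel and every epimorphism a cokernel, so $\EE$ is abelian. Finally, to identify the exact structure as the standard one, I would observe that any short exact sequence $0 \to A \xrightarrow{f} B \xrightarrow{g} C \to 0$ has $f$ a monomorphism, hence an inflation, with $g$ its cokernel, so it is a conflation.

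The main obstacle I anticipate is precisely the key lemma that a monic deflation is invertible together with its dual, since this is the single place where the exact-category axioms must genuinely be invoked---namely that the kernel of a deflation is an inflation and that a conflation with vanishing left term has an invertible deflation. Everything else reduces to formal manipulation of the admissible factorization $\varphi = \iota\pi$ and an appeal to the recognition theorem for abelian categories.
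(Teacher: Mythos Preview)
The paper does not actually prove this proposition: it states that the result is well known, cites \cite[Exercise 8.6]{buhler}, and omits the proof. Your argument is a correct and standard way to supply the details---factoring an arbitrary morphism via its admissible decomposition to produce kernels and cokernels, then using the lemma that a monic deflation is invertible to show every monomorphism is an inflation (and dually), so that $\EE$ is abelian by the normal/conormal criterion---and it matches the spirit of what B\"uhler's exercise asks for.
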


By using this characterization, we can give the following criterion for wide subcategories.
\begin{lemma}\label{lem:widecri}
  Let $\WW$ be a subcategory of $\EE$. Then $\WW$ is a wide subcategory of $\EE$ if and only if the following conditions are satisfied:
    \begin{enumerate}[label={\upshape(\alph*)}]
      \item For any conflation $0 \to X \to Y \to Z \to 0$, if two out of $X,Y,Z$ belong to $\WW$, then so does the third.
      \item For any morphism $\varphi \colon X \to Y$ in $\WW$, there are an object $W$ in $\WW$, a deflation $\pi\colon X \defl W$ and an inflation $\iota\colon W \to Y$ satisfying $\varphi = \iota\pi$.
    \end{enumerate}
\end{lemma}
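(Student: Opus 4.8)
The plan is to prove the two directions of the equivalence separately, using Proposition \ref{prop:ab} applied to $\WW$ with its natural exact structure.

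First I would prove necessity. Suppose $\WW$ is a wide subcategory of $\EE$. Condition (a) is almost immediate: $\WW$ is closed under extensions by definition; and since $\WW$ is an exact abelian subcategory, if $0 \to X \to Y \to Z \to 0$ is a conflation in $\EE$ with $X, Y \in \WW$, then $X \infl Y$ is an inflation in $\WW$ whose cokernel $Y/X$, computed in $\WW$, is also a cokernel in $\EE$ (because the inclusion is exact and conflations are determined by their inflations), hence $Z \iso Y/X \in \WW$; the case $Y, Z \in \WW$ is dual. For condition (b), take a morphism $\varphi\colon X \to Y$ in $\WW$. Since $\WW$ is abelian, the image factorization $\varphi = \iota' \pi'$ exists in $\WW$, with $\pi'\colon X \defl W$ an epimorphism and $\iota'\colon W \infl Y$ a monomorphism in $\WW$. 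Because the inclusion $\WW \hookrightarrow \EE$ is exact and $\WW$ is abelian, $\pi'$ is a deflation and $\iota'$ is an inflation in $\WW$, hence in $\EE$, which is exactly what (b) requires (with $W$ the image object, which lies in $\WW$).

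Next I would prove sufficiency. Assume (a) and (b). Endow $\WW$ with its natural exact structure as an extension-closed subcategory (legitimate by (a), which in particular gives extension-closedness), so that the inclusion $\WW \hookrightarrow \EE$ is automatically exact. By Proposition \ref{prop:ab} it then suffices to show that every morphism in $\WW$ is admissible \emph{as a morphism of the exact category $\WW$}. So fix $\varphi\colon X \to Y$ in $\WW$ and apply (b) to get $W \in \WW$, a deflation $\pi\colon X \defl W$ and an inflation $\iota\colon W \infl Y$ in $\EE$ with $\varphi = \iota\pi$. The only thing to check is that $\pi$ is a deflation in $\WW$ and $\iota$ is an inflation in $\WW$: for $\pi$, complete it to a conflation $0 \to K \to X \to W \to 0$ in $\EE$; since $X, W \in \WW$, condition (a) forces $K \in \WW$, so this is a conflation in $\WW$ and $\pi$ is a deflation there. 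Dually, completing $\iota$ to a conflation $0 \to W \to Y \to C \to 0$ in $\EE$ and using (a) with $W, Y \in \WW$ gives $C \in \WW$, so $\iota$ is an inflation in $\WW$. Hence $\varphi$ is admissible in $\WW$, and Proposition \ref{prop:ab} shows $\WW$ is abelian with the standard exact structure; together with extension-closedness and exactness of the inclusion, $\WW$ is a wide subcategory.

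The only subtle point — and the step I would be most careful about — is the interplay between the exact structure of $\EE$ and the induced exact structure of $\WW$: one must be sure that a deflation (resp. inflation) in $\EE$ whose kernel (resp. cokernel) lies in $\WW$ is genuinely a deflation (resp. inflation) in $\WW$, and conversely that admissibility computed inside $\WW$ is the same as the factorization demanded in (b). Both follow from the definition of the natural exact structure on an extension-closed subcategory together with the fact that, in an exact category, a deflation is determined up to isomorphism by its kernel inclusion and vice versa; once this is spelled out, everything else is a routine application of Proposition \ref{prop:ab}.
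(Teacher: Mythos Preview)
Your proposal is correct and follows essentially the same approach as the paper's proof: both directions hinge on Proposition~\ref{prop:ab} applied to $\WW$ with its induced exact structure, and your argument for (a) in the necessity direction (take the cokernel in $\WW$, use exactness of the inclusion to compare with $Z$) matches the paper's exactly. If anything, you are more explicit than the paper in the sufficiency direction---the paper simply asserts ``By (a) and (b), we can check that this exact category $\WW$ satisfies Proposition~\ref{prop:ab}(2)'', whereas you actually carry out that check by completing $\pi$ and $\iota$ to conflations and invoking (a) to pull the kernel and cokernel into $\WW$.
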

\begin{proof}
  First suppose that $\WW$ is a wide subcategory.

  (a)
  Let $0 \to X \xrightarrow{\varphi} Y \to Z \to 0$ be a conflation in $\EE$. Since $\WW$ is closed under extensions, if $X$ and $Z$ belong to $\WW$, then so does $Y$. If $X$ and $Y$ belongs to $\EE$, then we obtain a usual short exact sequence $0 \to X \xrightarrow{\varphi} Y \to \coker \varphi \to 0$ in an abelian category $\AA$.
  Since the embedding $\AA \hookrightarrow \EE$ is exact, this is actually a conflation in $\EE$. It follows that $Z$ is isomorphic to $\coker\varphi$, thus $Z \in \AA$. The remaining case is similar.

  (b)
  It is easy to see that the exact structure induced from the embedding $\AA \hookrightarrow \EE$ coincides with the standard exact structure on $\AA$. Thus (b) follows from Proposition \ref{prop:ab}.

  Conversely, suppose that $\WW$ satisfies (a) and (b). Then by (a), $\WW$ is closed under extensions in $\EE$, thus $\WW$ can be regarded as an exact category, and the embedding $\WW \hookrightarrow \EE$ is exact. By (a) and (b), we can check that this exact category $\WW$ satisfies Proposition \ref{prop:ab} (2), thus $\WW$ is an abelian category with the usual exact structure. Therefore $\WW$ is a wide subcategory of $\EE$.
\end{proof}

Next we prove the following basic properties on $\Filt\CC$.
\begin{lemma}\label{lem:filtext}
  Let $\CC$ be a collection of objects in $\EE$. Then the following hold.
  \begin{enumerate}
    \item $\Filt \CC$ is the smallest extension-closed subcategory of $\EE$ containing $\CC$.
    \item If $X$  has a $\CC$-filtration $0 = X_0 \infl X_1 \infl \cdots \infl X_l = X$, then $X_i$ and $X/X_i$ has $\CC$-filtrations of length $i$ and $l-i$ respectively for each $i$.
  \end{enumerate}
\end{lemma}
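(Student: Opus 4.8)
The plan is to reduce both statements to a couple of standard facts about exact categories from \cite{buhler}: that inflations are closed under composition and that pullbacks of deflations along arbitrary morphisms exist and are again deflations with the same kernel; and the Noether (third) isomorphism theorem, namely that for a chain of inflations $A \infl B \infl C$ the induced morphism $B/A \to C/A$ is an inflation with $(C/A)/(B/A) \cong C/B$, together with its variant that pulling back a deflation $p \colon M \defl N$ along an inflation $N' \infl N$ produces an object $M'$ sitting in a conflation $0 \to \ker p \to M' \to N' \to 0$ and an inflation $M' \infl M$ with $M/M' \cong N/N'$. Granting these, both parts become inductions on the length of a filtration. I would prove (2) first, as the minimality half of (1) uses it.

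For (2), the assertion about $X_i$ is immediate: since inflations compose, the truncation $0 = X_0 \infl X_1 \infl \cdots \infl X_i$ is already a $\CC$-filtration, of length $i$. For $X/X_i$, observe that $X_i \infl X_j$ is an inflation for every $j \ge i$ (again a composite), so, writing $\bar{X}_j := X_j/X_i$, the Noether isomorphism theorem yields inflations $\bar{X}_{j-1} \infl \bar{X}_j$ with $\bar{X}_j/\bar{X}_{j-1} \cong X_j/X_{j-1} \in \CC$; thus $0 = \bar{X}_i \infl \bar{X}_{i+1} \infl \cdots \infl \bar{X}_l = X/X_i$ is a $\CC$-filtration of length $l - i$.

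For (1), I would first note that $\Filt\CC$ is an isomorphism-closed subcategory containing $\CC$: the zero object has the empty filtration, each $C \in \CC$ has the length-one filtration $0 \infl C$, and an isomorphism $X \xrightarrow{\sim} X'$ may be absorbed into the final inflation $X_{l-1} \infl X$ of a $\CC$-filtration of $X$ (the cokernel being still $\cong X/X_{l-1} \in \CC$), so $X' \in \Filt\CC$. To see $\Filt\CC$ is extension-closed, take a conflation $0 \to L \to M \xrightarrow{p} N \to 0$ with $L, N \in \Filt\CC$, fix a $\CC$-filtration $0 = N_0 \infl \cdots \infl N_m = N$, and induct on $m$: if $m = 0$ then $N = 0$ and $M \cong L \in \Filt\CC$; if $m \ge 1$, pull $p$ back along $N_{m-1} \infl N$ to obtain a conflation $0 \to L \to M' \to N_{m-1} \to 0$ and an inflation $M' \infl M$ with $M/M' \cong N_m/N_{m-1} \in \CC$, apply the inductive hypothesis to $0 \to L \to M' \to N_{m-1} \to 0$ (here $N_{m-1}$ carries a $\CC$-filtration of length $m-1$) to get $M' \in \Filt\CC$, and append $M' \infl M$ to a $\CC$-filtration of $M'$. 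For minimality, let $\DD \supseteq \CC$ be extension-closed; given $X \in \Filt\CC$ with a $\CC$-filtration of length $l$, induct on $l$: the case $l = 0$ is $X = 0 \in \DD$, and for $l \ge 1$ part (2) shows $X_{l-1}$ has a $\CC$-filtration of length $l-1$, so $X_{l-1} \in \DD$ by induction, whereas $X/X_{l-1} \in \CC \subseteq \DD$, and extension-closedness applied to $0 \to X_{l-1} \to X \to X/X_{l-1} \to 0$ gives $X \in \DD$.

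The whole argument is formal, so I do not anticipate a real obstacle; the one point requiring genuine care is that in the extension-closedness step the pullback $M'$ maps into $M$ by an \emph{inflation} (not merely a monomorphism) with cokernel exactly $N_m/N_{m-1}$. This is the content of the pullback/Noether lemmas cited above, and once they are invoked the two inductions go through without further complications.
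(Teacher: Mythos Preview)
Your proposal is correct and follows essentially the same approach as the paper: the paper's proof is the single line ``This immediately follows from the Noether isomorphism theorem in exact categories, see \cite[Proposition 2.5]{eno},'' and you have simply unpacked that reference into the two routine inductions it entails. The only difference is level of detail; no new idea is involved.
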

\begin{proof}
  This immediately follows from the Noether isomorphism theorem in exact categories, see \cite[Proposition 2.5]{eno} for example.
\end{proof}

Now we will prove that $\Filt\SS$ is wide for a semibrick $\SS$. The following is a key lemma.
\begin{lemma}\label{lem:frombrick}
  Let $\SS$ be a semibrick in $\EE$, and let $\varphi \colon S \to Y$ be a morphism in $\EE$ with $S \in \SS$ and $Y \in \Filt \SS$. Then either $\varphi$ is zero or $\varphi$ is an inflation in $\EE$ satisfying $Y/S \in \Filt\SS$.
\end{lemma}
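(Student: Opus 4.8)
The plan is to argue by induction on the length $l$ of an $\SS$-filtration $0 = Y_0 \infl Y_1 \infl \cdots \infl Y_l = Y$ of $Y$. The case $l = 0$ is trivial, since then $Y = 0$ and $\varphi = 0$. For the inductive step, set $W := Y_{l-1}$; by Lemma \ref{lem:filtext}(2) the object $W$ lies in $\Filt\SS$ and admits an $\SS$-filtration of length $l-1$, and there is a conflation $0 \to W \xrightarrow{\iota} Y \xrightarrow{p} T \to 0$ with $T := Y/W \in \SS$, where $\iota$ is a kernel of the deflation $p$. The idea is to test $\varphi$ against $p$: the composite $p\varphi \colon S \to T$ is a morphism between two members of the semibrick $\SS$, hence $p\varphi = 0$ whenever $S \not\cong T$, while if $S \cong T$ then $p\varphi$ is either $0$ or an isomorphism because $\End_\EE(S)$ is a division ring. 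In all cases $p\varphi$ is zero or invertible, and I would split into these two cases.

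If $p\varphi = 0$, then $\varphi$ factors through the kernel $\iota$ of $p$, say $\varphi = \iota\psi$ with $\psi \colon S \to W$. Applying the induction hypothesis to $\psi$, either $\psi = 0$, whence $\varphi = 0$, or $\psi$ is an inflation with $W/S \in \Filt\SS$; in the latter subcase $\varphi = \iota\psi$ is a composite of inflations, hence an inflation, and the Noether isomorphism theorem in exact categories (\cite[Proposition 2.5]{eno}) supplies a conflation $0 \to W/S \to Y/S \to T \to 0$, so that $Y/S \in \Filt\SS$ by the extension-closedness of $\Filt\SS$ from Lemma \ref{lem:filtext}(1), using $W/S \in \Filt\SS$ and $T \in \SS \subseteq \Filt\SS$. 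If instead $p\varphi$ is an isomorphism, then $s := \varphi(p\varphi)^{-1} \colon T \to Y$ is a section of $p$, so the conflation $0 \to W \to Y \to T \to 0$ splits along $s$; consequently $\varphi = s\circ(p\varphi)$ is an inflation (a split inflation precomposed with an isomorphism), its image agrees with that of $s$, and the splitting identifies $Y/S \cong \coker s \cong W \in \Filt\SS$. This closes the induction.

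The routine ingredients are all standard facts about exact categories (e.g.\ from \cite{buhler}): that the inflation in a conflation is a kernel of the deflation, that composites of inflations are inflations, that a deflation admitting a section gives a splitting of the conflation, and the Noether isomorphism conflation used above. The step that genuinely carries the content, and where I would be most careful, is the case where $p\varphi$ is invertible: one must use that $S$ is a brick — not merely Hom-orthogonality within $\SS$ — to produce the isomorphism, and then see that this isomorphism forces the extension of $T$ by $W$ to split in precisely the way that makes $Y/S \cong W$. Conceptually this is the mechanism by which a brick mapping into a filtered object ``peels off a top composition layer'', and it is the heart of the lemma.
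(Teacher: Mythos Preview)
Your proof is correct and follows essentially the same strategy as the paper's: test $\varphi$ against the deflation onto the top composition factor of $Y$, descend through the filtration when the composite vanishes, and split off the top layer when it is an isomorphism. The only difference is organizational: the paper unrolls the descent as an iteration down the filtration and then applies a single $3\times 3$ diagram at the end, whereas you package the same descent as an explicit induction on the filtration length and use the Noether isomorphism conflation in the inductive step; both arguments have the same content.
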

\begin{proof}
  Assume that $\varphi$ is non-zero.
  Take an $\SS$-filtration $0 = Y_0 \infl Y_1 \infl \cdots \infl Y_{m-1} \infl Y_m = Y$ of $Y$. Let $\iota_i$ and $\pi_i$ be the following natural morphisms which give the following conflation for $i = 0, \dots,m-1$:
  \[
  \begin{tikzcd}
    0 \rar & Y_i \rar["\iota_i"] & Y_{i+1} \rar["\pi_{i+1}"] & Y_{i+1}/Y_i \rar & 0
  \end{tikzcd}
  \]
  Consider the composition $\pi_m\varphi\colon S \to Y/Y_{m-1}$. Since $S$ and $Y/Y_{m-1}$ belong to the semibrick $\SS$, we have that $\pi_m \varphi$ is either zero or an isomorphism. If $\pi_m\varphi = 0$, then $\varphi$ factors through $\iota_{m-1}$, that is, we have a map $\varphi_1 \colon S \to Y_{m-1}$ which makes the following diagram commutes:
  \[
  \begin{tikzcd}
    & & S \dar["\varphi"] \ar[ld, dashed, "\varphi_1"']\\
    0 \rar & Y_{m-1} \rar["\iota_{m-1}"] & Y \rar["\pi_m"] & Y/Y_{m-1} \rar & 0
  \end{tikzcd}
  \]
  Since $\varphi \neq 0$, by repeating this process, we obtain a map $\varphi_i \colon S \to Y_{m-i}$ such that $\pi_{m-i}\varphi_i$ is an isomorphism and that $\varphi = \iota_{m-1} \cdots \iota_{m-i}\varphi_i$. Then we have the following commutative diagram:
  \[
  \begin{tikzcd}
    & & S \dar["\varphi_i"] \ar[rd, "\pi_{m-i}\varphi_i"]\\
    0 \rar & Y_{m-i-1} \rar["\iota_{m-i-1}"] & Y_{m-i} \rar["\pi_{m-i}"] & Y_{m-i}/Y_{m-i-1} \rar & 0
  \end{tikzcd}
  \]
  It follows that $\pi_{m-i}$ is a retraction, thus the above conflation splits. Therefore, the above diagram is isomorphic to the following commutative diagram:
  \[
  \begin{tikzcd}
    & & S \dar["\begin{sbmatrix}a \\ 1 \end{sbmatrix}"'] \ar[rd, "1"]\\
    0 \rar & Y_{m-i-1} \rar["\begin{sbmatrix}1 \\ 0 \end{sbmatrix}"] & Y_{m-i-1}\oplus S \rar["{[0,1]}"'] & S \rar & 0
  \end{tikzcd}
  \]
  Here $1$ denotes the identity maps and $a$ denotes some map. Since $^t [a,1]$ is isomorphic to $^t [0,1]\colon S \to Y_{m-i-1} \oplus S$ as morphisms, so is $\varphi_i$. Moreover, since $^t[0,1]$ is an inflation with its cokernel $Y_{m-i-1}$, so is $\varphi_i$. Since inflations are closed under compositions, $\varphi = \iota_{m-1} \cdots \iota_{m-i} \varphi_i$ is an inflation.
  Now we can form the following exact commutative diagram, see \cite[Lemma 3.5]{buhler}.
  \[
  \begin{tikzcd}
    & & 0 \dar & 0 \dar \\
    0 \rar & S \rar["\varphi_i"]\dar[equal] & Y_{m-i} \rar\dar["\iota"] & Y_{m-i-1} \rar\dar & 0 \\
    0 \rar & S \rar["\varphi"] & Y \rar\dar & Y/S \rar\dar & 0 \\
    & & Y/Y_{m-i} \dar\rar[equal] & Y/Y_{m-i}\dar \\
    & & 0 & 0
  \end{tikzcd}
  \]
  Here $\iota:=\iota_{m-1} \cdots \iota_{m-i}$. Then Lemma \ref{lem:filtext} implies that $Y_{m-i-1}$ and $Y/Y_m$ belong to $\Filt\SS$, and so does $Y/S$.
\end{proof}
Now we can show that $\Filt\SS$ is wide.
\begin{lemma}\label{lem:semiwide}
  Let $\SS$ be a semibrick in $\EE$. Then $\Filt\SS$ is a wide subcategory of $\EE$.
\end{lemma}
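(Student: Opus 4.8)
The plan is to check that $\WW:=\Filt\SS$ satisfies conditions (a) and (b) of Lemma \ref{lem:widecri}. Condition (a) I would split into three cases according to which term of a conflation $0\to X\to Y\to Z\to 0$ is the one to be produced; condition (b), the existence of admissible factorizations, is where the real work lies, and I would handle it by induction on the length of an $\SS$-filtration of the source, using Lemma \ref{lem:frombrick} (and a pullback) at each step.

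For (a): if $X,Z\in\WW$ then $Y\in\WW$ because $\Filt\SS$ is extension-closed (Lemma \ref{lem:filtext}(1)). If $X,Y\in\WW$, I would induct on $\length X$: choosing a conflation $0\to X_1\infl X\to\bar X\to0$ with $X_1\in\SS$ and $\length\bar X<\length X$, the composite $X_1\infl X\infl Y$ is a monomorphism out of a nonzero object, hence a nonzero inflation, so Lemma \ref{lem:frombrick} gives $Y/X_1\in\Filt\SS$; the Noether isomorphism theorem in exact categories \cite[Proposition~2.5]{eno} then produces a conflation $0\to\bar X\to Y/X_1\to Z\to0$, to which the inductive hypothesis applies. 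The case $Y,Z\in\WW$ is formally dual: by Lemma \ref{lem:filtext}(1), $\Filt\SS$ is the same subcategory whether computed in $\EE$ or in $\EE^{\op}$ (both are the smallest extension-closed subcategory containing $\SS$, and extension-closedness is self-dual), and $\SS$ is still a semibrick in $\EE^{\op}$, so Lemma \ref{lem:frombrick} read in $\EE^{\op}$ says that every nonzero $Y\defl S$ with $Y\in\Filt\SS$ and $S\in\SS$ is a deflation with $\ker\in\Filt\SS$; one then induct on $\length Z$, taking a conflation $0\to Z'\to Z\to S\to0$ with $S\in\SS$, so that $Y\defl Z\defl S$ is a nonzero deflation whose kernel $Y'\in\Filt\SS$ sits in a conflation $0\to X\to Y'\to Z'\to0$ with $\length Z'<\length Z$.

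For (b): given $\varphi\colon X\to Y$ in $\WW$, induct on $\length X$, the case $X=0$ being trivial with $W=0$. Otherwise fix a conflation $0\to X_1\xrightarrow{i}X\xrightarrow{p}\bar X\to0$ with $X_1\in\SS$ and apply Lemma \ref{lem:frombrick} to $j:=\varphi i$. If $j=0$, then $\varphi=\psi p$ for some $\psi\colon\bar X\to Y$ (cokernel property), the inductive hypothesis gives $\psi=\iota\pi'$ with $\pi'$ a deflation onto some $W\in\WW$ and $\iota$ an inflation, and then $\varphi=\iota(\pi'p)$ works since $\pi'p$ is a deflation. If instead $j$ is an inflation with cokernel $q\colon Y\defl\bar Y$ and $\bar Y\in\Filt\SS$, then $q\varphi i=qj=0$ gives $q\varphi=\bar\varphi p$ for some $\bar\varphi\colon\bar X\to\bar Y$; the inductive hypothesis gives $\bar\varphi=\bar\iota\bar\pi$ with $\bar\pi\colon\bar X\defl\bar W$, $\bar W\in\WW$, $\bar\iota$ an inflation; and I would let $W$ be the pullback of $\bar\iota$ along $q$. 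By the standard behaviour of pullbacks along deflations in exact categories \cite{buhler}, $\iota\colon W\infl Y$ is an inflation and $W\defl\bar W$ has kernel $X_1$, so $0\to X_1\to W\to\bar W\to0$ is a conflation and $W\in\Filt\SS$; the pair $(\varphi,\bar\pi p)$ induces $\pi\colon X\to W$ with $\iota\pi=\varphi$, and $\pi$ is a morphism of conflations $(0\to X_1\to X\to\bar X\to0)\to(0\to X_1\to W\to\bar W\to0)$ which is the identity on $X_1$ and is $\bar\pi$ on $\bar X$.

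I expect the main obstacle to be the closing step of this second case: deducing that $\pi$ is itself a deflation. For that I would record the elementary lemma that in a morphism of conflations whose left-hand vertical map is an isomorphism and whose right-hand vertical map is a deflation, the middle map is a deflation — proved by factoring the middle map through the pullback of the target conflation along the right-hand map and invoking the short five lemma \cite[Corollary~3.2]{buhler}. Everything else is routine: the cokernel and pullback universal properties, the Noether isomorphisms, and keeping track of which inductive hypothesis is being used.
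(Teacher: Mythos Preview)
Your proposal is correct and follows essentially the same route as the paper: verify conditions (a) and (b) of Lemma~\ref{lem:widecri} by induction on the length of an $\SS$-filtration of the source, invoking Lemma~\ref{lem:frombrick} at the base and duality for the remaining case of (a). The only cosmetic difference is the closing step of (b): the paper stacks two pullback squares so that $\pi$ is visibly a pullback of the deflation $\bar\pi$, whereas you extract the same conclusion via the morphism-of-conflations lemma (isomorphism on the left and deflation on the right force a deflation in the middle), which is that pullback argument unpacked.
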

\begin{proof}
  We check two conditions (a) and (b) in Lemma \ref{lem:widecri}. Note that $\Filt\SS$ is extension-closed by Lemma \ref{lem:filtext}.
  Let $\varphi \colon X \to Y$ be a morphism in $\Filt\SS$. By duality, to prove (a), it suffices to show that (a$'$) if $\varphi$ is an inflation, then $Y/X$ belongs to $\Filt\SS$. To prove (b), we show that there is a factorization $\varphi = \iota\pi$ inside $\Filt\SS$ such that $\pi$ is a deflation and $\iota$ is an inflation.

  We prove both (a$'$) and (b) by induction on the length $l$ of an $\SS$-filtration of $X$. The case $l=0$ is trivial, and the case $l=1$ immediately follows from Lemma \ref{lem:frombrick}.

  Suppose $l > 1$. Then by Lemma \ref{lem:filtext}, we can take a conflation $0 \to S \to X \to X/S \to 0$ such that $S \in \SS$ and $X/S$ has an $\SS$-filtration of length $l-1$.

  (a$'$)
  Suppose that $\varphi \colon X \to Y$ is an inflation. Then we can form the following exact commutative diagram, see \cite[Lemma 3.5]{buhler}.
  \[
  \begin{tikzcd}
    & & 0 \dar & 0\dar \\
    0 \rar & S \dar[equal]\rar & X \rar\dar & X/S\dar \rar & 0 \\
    0 \rar & S \rar & Y \rar\dar & Y/S \dar \rar & 0 \\
    &  & Y/X \rar[equal] \dar& Y/X \dar \\
    & & 0 & 0
  \end{tikzcd}
  \]
  Lemma \ref{lem:frombrick} implies that $Y/S \in \Filt\SS$ holds. Thus we can apply the induction hypothesis to the rightmost column to conclude $Y/X \in \Filt\SS$.

  (b)
  Consider the following diagram:
  \[
  \begin{tikzcd}
    0 \rar & S \rar["f"] & X \rar["g"]\dar["\varphi"] & X/S\rar \ar[ld, dashed, "\psi"] & 0 \\
  & &Y \\
  \end{tikzcd}
  \]
  If $\varphi f$ is zero, then there is a map $\psi \colon X/S \to Y$ which make the above diagram commutes. Then by the induction hypothesis, we can write as $\psi = i p$ inside $\Filt\SS$ with $p$ a deflation and $i$ an inflation. Thus $\varphi = i (p g)$ gives the desired factorization.

  Suppose that $\varphi f$ is non-zero. Then $\varphi f \colon S \to Y$ is an inflation with $Y/S \in \Filt\SS$ by Lemma \ref{lem:frombrick}.
  Thus we obtain the following exact commutative diagram.
  \[
  \begin{tikzcd}
    0 \rar & S \dar[equal]\rar & X \ar[rd, phantom, "{\rm p.b.}"] \rar\dar["\varphi"] & X/S\dar["\ov{\varphi}"] \rar & 0 \\
    0 \rar & S \rar & Y \rar & Y/S  \rar & 0 \\
  \end{tikzcd}
  \]
  Then the right square is a pullback diagram, see \cite[Proposition 2.12]{buhler}. By induction hypothesis, we obtain a commutative diagram
  \[
  \begin{tikzcd}
    X/S \ar[rr, "\ov{\varphi}"]\ar[rd, twoheadrightarrow, "p"'] & & Y/S \\
    & W \ar[ru, rightarrowtail, "i"']
  \end{tikzcd}
  \]
  with $W\in\Filt\SS$ and $p$ a deflation and $i$ an inflation. By taking the pullback, we obtain the following exact commutative diagram.
  \[
  \begin{tikzcd}
    0 \rar & S \rar \dar[equal]& W' \rar\dar["\iota"]\ar[rd, phantom, "{\rm p.b.}"] & W \dar["i", rightarrowtail] \rar & 0 \\
    0 \rar & S \rar & Y \rar & Y/S \rar & 0
  \end{tikzcd}
  \]
  The top row implies that $W'$ belongs to $\Filt\SS$ by Lemma \ref{lem:filtext}. Moreover, $\iota$ is an inflation, see \cite[Proposition 2.15]{buhler}. On the other hand, by considering the pullback of $W' \to W$ along $p$ (which exists since $p$ is a deflation), the universal property of the pullback square shows that the commutative exists.
  \[
  \begin{tikzcd}
    X \rar\dar["\pi", twoheadrightarrow]\ar[rd, phantom, "{\rm p.b.}"] \ar[dd,bend right, "\varphi"']& X/S \dar[twoheadrightarrow, "p"] \\
    W' \rar\dar[rightarrowtail, "\iota"] \ar[rd,phantom, "{\rm p.b.}"] & W\dar[rightarrowtail, "i"] \\
    Y \rar[twoheadrightarrow] & Y/S
  \end{tikzcd}
  \]
  Here $\pi$ is a deflation since it is a pullback of the deflation $p$. This proves (b).
\end{proof}

\begin{lemma}\label{lem:widewd}
  Let $\SS$ be a semibrick in $\EE$. Then $\Filt\SS$ is a length wide subcategory of $\EE$, and $\simp(\Filt\SS) = \SS$ holds.
\end{lemma}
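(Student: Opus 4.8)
The plan is to build on Lemma~\ref{lem:semiwide}, which already tells us that $\Filt\SS$ is a wide subcategory of $\EE$; in particular $\Filt\SS$ is an abelian category whose induced exact structure is the standard one, and a conflation in $\EE$ all of whose terms lie in $\Filt\SS$ is precisely a short exact sequence in $\Filt\SS$. It therefore remains to show that $\simp(\Filt\SS)=\SS$ (which makes $\simp(\Filt\SS)$ a set, since a semibrick is by definition a set) and that $\Filt\SS=\Filt(\simp(\Filt\SS))$. Throughout I will use the elementary fact that \emph{any nonzero morphism between two objects of $\SS$ is an isomorphism}: this is the brick condition when the two objects are isomorphic, and it is vacuous by the semibrick condition otherwise.

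First I would prove $\SS\subseteq\simp(\Filt\SS)$. Fix $S\in\SS$, so $S\neq 0$, and suppose $0\to L\xrightarrow{i}S\xrightarrow{p}N\to 0$ is a conflation in $\Filt\SS$ with $L\neq 0$; the goal is $N=0$. Since $L$ is a nonzero object of $\Filt\SS=\Filt(\SS)$, the first step of an $\SS$-filtration of $L$ supplies an inflation $j\colon S'\infl L$ with $S'\in\SS$. The composite $ij\colon S'\to S$ is a composition of monomorphisms out of the nonzero object $S'$, hence nonzero, hence an isomorphism by the fact recalled above. Then $s:=j\,(ij)^{-1}$ satisfies $is=\id_S$, so $p=p\,i\,s=0$ because $pi=0$; as $p$ is a deflation, hence an epimorphism, this forces $N=0$. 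Thus $S$ is simple in $\Filt\SS$.

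Next I would prove $\simp(\Filt\SS)\subseteq\SS$ and conclude. Let $M$ be simple in $\Filt\SS$; then $M\neq 0$, so an $\SS$-filtration of $M$ has positive length, and its first term yields an inflation $S\infl M$ with $S\in\SS$. By Lemma~\ref{lem:filtext}(2) the cokernel $M/S$ again lies in $\Filt\SS$, so $0\to S\to M\to M/S\to 0$ is a conflation in $\Filt\SS$; simplicity of $M$ together with $S\neq 0$ gives $M/S=0$, i.e.\ $M\cong S\in\SS$. Hence $\simp(\Filt\SS)=\SS$, a set. Finally, by Lemma~\ref{lem:filtext}(2) every $\SS$-filtration of an object of $\Filt\SS$ has all of its terms in $\Filt\SS$, and is therefore an $\SS$-filtration inside the exact category $\Filt\SS$; thus $\Filt\SS=\Filt(\simp(\Filt\SS))$ with $\simp(\Filt\SS)$ a set, so $\Filt\SS$ is a length exact category, and being abelian it is a length abelian category, i.e.\ a length wide subcategory of $\EE$.

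The main obstacle is the inclusion $\SS\subseteq\simp(\Filt\SS)$: an element of $\SS$ is a priori only a brick, and bricks in an abelian category need not be simple, so one must exploit that $\Filt\SS$ is generated by the semibrick. The decisive observation is that a nonzero proper subobject $L$ of $S\in\SS$ necessarily contains a subobject isomorphic to some $S'\in\SS$, which produces a nonzero --- hence, by the brick/semibrick dichotomy, invertible --- morphism $S'\to S$ factoring through $L$; this makes the inclusion of $L$ into $S$ a retraction, which forces $L=S$. The only remaining care is to read off $N=0$ from $p=0$ using that deflations are epimorphisms, rather than appealing to abelian-category intuition.
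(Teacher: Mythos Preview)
Your proof is correct and structurally parallel to the paper's: you invoke Lemma~\ref{lem:semiwide} for wideness, establish $\simp(\Filt\SS)=\SS$ via the two inclusions, and then read off the length property. The reverse inclusion $\simp(\Filt\SS)\subseteq\SS$ and the length conclusion are argued exactly as in the paper (with more detail on your side).

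The one genuine difference is in the inclusion $\SS\subseteq\simp(\Filt\SS)$. The paper dispatches this by citing Lemma~\ref{lem:frombrick}: given a conflation $0\to L\to S\to N\to 0$ in $\Filt\SS$, apply that lemma to the deflation $S\to N$; it is either zero (so $N=0$) or an inflation, hence both mono and epi in the abelian category $\Filt\SS$, hence an isomorphism (so $L=0$). You instead work on the \emph{subobject} side: pick $S'\in\SS$ inside a nonzero $L$ via its $\SS$-filtration and use the semibrick dichotomy on $S'\to S$ directly to split $i$. Your argument is more elementary in that it avoids appealing to Lemma~\ref{lem:frombrick} and uses only the raw semibrick condition; the paper's route is shorter because that lemma is already on the table. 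Both are perfectly valid, and yours has the minor expository virtue of making the ``bricks need not be simple, but here they are'' point explicit.
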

\begin{proof}
  By Lemma \ref{lem:semiwide}, we have that $\Filt\SS$ is a wide subcategory of $\EE$.
  Moreover, Lemma \ref{lem:frombrick} immediately implies that every $S$ in $\SS$ is a simple object in an abelian category $\Filt\SS$. Conversely, let $X$ be a simple object in $\Filt\SS$. Since $X$ is simple, it is clear that this $\SS$-filtration has length one, that is, $X \in \SS$.
  Thus $\simp(\Filt\SS) = \SS$ holds (in particular, $\simp(\Filt\SS)$ is a set).
  Since $\Filt\SS = \Filt(\simp(\Filt\SS))$ holds, $\Filt\SS$ is a length abelian category.
\end{proof}

\begin{proof}[Proof of Theorem \ref{thm:main}]
  For a length wide subcategory $\WW$ of $\EE$, Schur's Lemma \ref{lem:Schur} shows that $\simp\WW$ is a semibrick in $\EE$. For a semibrick $\SS$ of $\EE$, we have that $\Filt\SS$ is a wide subcategory of $\EE$ by Lemma \ref{lem:semiwide}.
  Thus the ``maps" $\Filt(-) \colon \{ \text{semibricks in $\EE$} \} \leftrightarrows \{\text{wide subcategories of $\EE$} \} \colon \simp(-)$ is well-defined.

  We will see that these maps are mutually inverse.
  For every semibrick $\SS$ in $\EE$, we have $\SS = \simp(\Filt\SS)$ by Lemma \ref{lem:widewd}.
  For the opposite direction, let $\WW$ be a length wide subcategory of $\EE$.
  Since $\WW$ is length, clearly we have $\WW = \Filt_\WW(\simp\WW)$, where $\Filt_\WW$ is $\Filt$ inside the abelian category $\WW$. Since $\WW$ is wide, the inclusion $\WW \hookrightarrow \EE$ is exact, which implies that $\WW \subset \Filt(\simp\WW)$ holds, where $\Filt$ is considered inside $\EE$.
  On the other hand, since $\WW$ is extension-closed, $\Filt(\simp\WW) \subset \WW$ holds. Thus $\WW = \Filt(\simp\WW)$ holds.
\end{proof}

\medskip\noindent
{\bf Acknowledgement.}
This work is supported by JSPS KAKENHI Grant Number JP18J21556.

\end{document}